\theoremstyle{plain}
\newtheorem{theorem}{Theorem}
\newtheorem{proposition}[theorem]{Proposition}
\newtheorem{lemma}[theorem]{Lemma}
\newtheorem{corollary}[theorem]{Corollary}
\newtheorem{fact}[theorem]{Fact}
\newtheorem*{claim}{Claim}
\theoremstyle{definition}
\newcommand{\onto}{\twoheadrightarrow}
\newcommand{\gen}[1]{\langle #1 \rangle}
\renewcommand{\theta}{\vartheta}
\renewcommand{\phi}{\varphi}
\newcommand{\emp}{\varepsilon}
\newcommand{\op}{\mathrm{op}}
\newcommand{\im}{\mathrm{im}}
\newcommand{\var}{\mathbf}
\newcommand{\SL}{\var{SL}}
\newcommand{\V}{\mathbf{V}}
\newcommand{\W}{\mathbf{W}}
\newcommand{\Sat}{\mathrm{Sat}}
\newcommand{\AP}{\var{A}}
\newcommand{\PL}{\mathrm{PL}}
\newcommand{\dsd}{\,{\ast\ast}\,}
\newcommand\pfun{\mathrel{\ooalign{\hfil$\mapstochar\mkern5mu$\hfil\cr$\to$\cr}}}
\newcommand{\relto}{\pfun}
\numberwithin{theorem}{section}
\newif\ifshowproofs
\title{Merge decompositions, two-sided Krohn-Rhodes, and aperiodic pointlikes}
\author{Samuel J. v. Gool \and Benjamin Steinberg}
\address{Department of Mathematics\\
    City College of New York\\
    Convent Avenue at 138th Street\\
    New York, New York 10031\\
    USA}
\email{samvangool@me.com\and bsteinberg@ccny.cuny.edu}
\thanks{The first-named author was supported by the European Union's Horizon 2020 research and innovation programme under the Marie Sklodowska-Curie grant \#655941; the second-named author was supported by United States - Israel Binational Science Foundation \#2012080 and NSA MSP \#H98230-16-1-0047.}
\date{\today}	
\begin{document}

\begin{abstract}
This paper provides short proofs of two fundamental theorems of finite semigroup theory whose previous proofs were significantly longer, namely the two-sided Krohn-Rhodes decomposition theorem and Henckell's aperiodic pointlike theorem, using a new algebraic technique that we call the merge decomposition.  A prototypical application of this technique decomposes a semigroup $T$ into a two-sided semidirect product whose components are built from two subsemigroups $T_1,T_2$, which together generate $T$, and the subsemigroup generated by their setwise product $T_1T_2$.  In this sense we decompose $T$ by merging the subsemigroups $T_1$ and $T_2$.   More generally, our technique merges semigroup homomorphisms from free semigroups.
\end{abstract}

\maketitle

\section*{Introduction}
Eilenberg's variety theorem~\cite{Eilenberg76} provides a dictionary between formal language theory and finite semigroup theory.  In particular, membership problems in certain Boolean algebras of regular languages (languages accepted by finite automata) are equivalent to membership problems in varieties of finite semigroups.  Other natural problems in language theory transform into questions about pointlikes with respect to a variety of finite semigroups, a notion introduced by Henckell and Rhodes~\cite{Hen1988}.  An important problem in language theory is the separation problem: given disjoint regular languages, determine whether they can be separated by a language from a given variety of regular languages.  The separation problem is equivalent to decidability of pointlike pairs~\cite{Almeida99}, which is strictly stronger than the membership problem~\cite{RS99,AS03}.   Decidability of pointlikes can be used to obtain decidability of membership problems of related varieties.  For instance, the second author showed, using the decidability of aperiodic pointlikes and Zelmanov's solution to the restricted Burnside problem, that the join of the variety of aperiodic semigroups with any variety of finite groups of bounded exponent has decidable membership problem, answering a question of Rhodes and Volkov~\cite{Slice}.

The first decidability result on pointlikes was Henckell's theorem on the decidability of aperiodic pointlikes~\cite{Hen1988}, which for a long time was considered one of the most difficult results in the subject. Henckell not only provided a decidability algorithm: he also gave an elegant structural description of the aperiodic pointlike sets that we call Henckell's formula.  Henckell's original proof idea is a variation on the holonomy proof ~\cite{holonomypdt} of the Krohn-Rhodes theorem~\cite{PDT} for  directly decomposing semigroups into wreath products.  
The difficult part of Henckell's proof is to prove that a certain semigroup is aperiodic, which he does by wreath product embeddings.  In~\cite{HRS2010AP}, Henckell, Rhodes and the second author provided a direct proof that Henckell's semigroup is aperiodic, leading to a simpler and shorter proof of his main theorem.  They also extended the theorem beyond aperiodic pointlikes to the variety of semigroups whose subgroups have prime divisors belonging to a fixed set $\pi$ of primes (the restriction of this proof to the aperiodic case can be found in~\cite[Ch.~4]{RS2009}).  Although simpler than the original proof of Henckell~\cite{Hen1988}, the proof in~\cite{HRS2010AP} is still non-trivial.

Recently, Place and Zeitoun~\cite{PlaZei2016FO} gave a new proof of the decidability of aperiodic pointlikes, which, unlike the previous proofs, is inductive.  They use a language theoretic reformulation of the problem of computing pointlike sets and the McNaughton-Sch\"utzenberger theorem that the aperiodic languages are precisely the first order definable languages~\cite{Str1994}. 
The Place-Zeitoun approach follows the inductive proof scheme of the Krohn-Rhodes theorem (the so-called `$V\cup T$' argument~\cite{Arbib,RS2009}) later used by Wilke in the logic context~\cite{Wil99}, but done in the power set of the semigroup.

This paper introduces a new algebraic tool, that we call the merge decomposition, in Section~\ref{sec:merge}. 
In Section~\ref{sec:KR}, we use this tool to give a short proof of the inductive step of the two-sided Krohn-Rhodes decomposition theorem (cf.~\cite[Ch.~5]{RS2009}). Then, in Section~\ref{sec:pointlikes}, we use the merge decomposition in the inductive step of the Place-Zeitoun inductive scheme to give a short algebraic proof of Henckell's formula for the aperiodic pointlikes.  We feel that our approach has several advantages over previous approaches~\cite{Hen1988,HRS2010AP,PlaZei2016FO}.  First of all, it leads to a significantly shorter proof than the previous ones.   Secondly, we obtain the best known bound on the length of a two-sided Krohn-Rhodes decomposition of the aperiodic semigroup witnessing pointlikes (or, equivalently, quantifier-depth of the first order formula giving separation).

 An advantage of our approach is that it is potentially extendable beyond the realm of first order logic on words.  For instance, decidability of pointlikes for some larger varieties than aperiodics is obtained in~\cite{HRS2010AP}.  We leave this to future work.



\section{Preliminaries}\label{sec:prelim}
We assume familiarity with notions from the theory of semigroups, in particular, relational morphisms and divisions, the wreath product (denoted $\wr$), and the two-sided semidirect product of semigroups (denoted $\bowtie$) and of varieties of finite semigroups (denoted $\ast\ast$); see, e.g.,~\cite[Ch.~1]{RS2009}. Throughout the paper, we call `variety' what is called `{\it pseudo}variety' in~\cite{RS2009}.

\subsubsection*{Augmented semigroups}
Let $T$ be a finite semigroup. Let $T^I$ be the monoid obtained by adjoining a new identity, $I$, to $T$ and $T^0$ the semigroup obtained by adjoining a new zero to $T$.
We denote by $\SL$ the variety of finite semilattices and by $U_1$ the two-element semilattice.
\begin{fact}\label{fac:I}
If a variety $\V$ contains $T$ and $\SL$, then $T^0 \in \V$. If a variety $\V$ contains $T$, $\SL$, and is generated by monoids, then $T^I \in \V$.
\end{fact}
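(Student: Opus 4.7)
The plan is to realize both $T^0$ and $T^I$ as divisors of a direct product involving $T$ (respectively an ambient monoid) and $U_1 = \{1, 0\}$, using the two elements of $U_1$ as a binary flag that distinguishes the adjoined new element from the rest of $T$. Both parts fit the same template; the only real difference is whether an external identity element has to be supplied.

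For the first statement, I would work inside $T \times U_1 \in \V$ and define a map $\phi \colon T \times U_1 \to T^0$ by $(t, 1) \mapsto t$ and $(t, 0) \mapsto 0$. Because $0$ is absorbing both in $U_1$ and in $T^0$, the four multiplication cases match up, so $\phi$ is a surjective semigroup homomorphism, exhibiting $T^0$ as a quotient of $T \times U_1 \in \V$.

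For the second statement, the obstacle is that we need a genuine identity to adjoin rather than a zero, and this is where the monoid-generation hypothesis enters: since $\V$ is generated by monoids and closed under finite products, $T$ divides some monoid $M \in \V$, say via a surjection $\pi \colon N \onto T$ with $N \leq M$. I would then pass to the subsemigroup
\[
S = (N \times \{0\}) \cup \{(1_M, 1)\}
\]
of $M \times U_1 \in \V$, and define a map $S \to T^I$ by $(n, 0) \mapsto \pi(n)$ and $(1_M, 1) \mapsto I$. The element $(1_M, 1)$ is the identity of $S$ and so plays precisely the role of the freshly adjoined identity $I$ of $T^I$.

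The only checks are that $S$ is closed under multiplication and that the prescribed map is a homomorphism; both boil down to $1_M$ being a two-sided identity of $M$, so I anticipate no genuine obstacle. The real conceptual content is simply that $U_1$ provides exactly the two-state flag needed to mark the extra element, while $T^I$, unlike $T^0$, demands an actual identity element to graft on, which is what the monoid-generation hypothesis is there to supply.
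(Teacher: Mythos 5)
Your proposal is correct and follows the same basic idea as the paper: both parts use $U_1$ as a two-state flag inside a direct product. The only difference is in the second part, where the paper splits into cases ($T$ a monoid: embed $T^I$ in $T\times U_1$; $T$ not a monoid: observe $T^I$ divides the same monoid $M$ that $T$ does), whereas your subsemigroup $(N\times\{0\})\cup\{(1_M,1)\}$ of $M\times U_1$ handles both cases uniformly and sidesteps the worry about whether $1_M$ already lies in $N$ — a perfectly valid, slightly cleaner variant.
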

\begin{proof}
The first statement is true because $T^0$ is a homomorphic image of $T \times U_1$~\cite[Ex.~I.9.2]{Eilenberg76}. For the second statement, we distinguish two cases. If $T$ is a monoid, then $T^I$ embeds in $T \times U_1$, where $U_1$ denotes the two-element semilattice~\cite[Ex.~I.9.1]{Eilenberg76}. If $T$ is not a monoid, then $T$ divides some monoid $M \in \V$, and since $T$ is not a monoid, it follows that $T^I$ divides the same monoid $M$.
\end{proof}

The semigroup $T$ acts faithfully on $T^I$ by multiplication on the right, and thus $T$ embeds into the semigroup of total functions on $T^I$; we identify every element $t \in T$ with the corresponding right multiplication map. Further, for every $t \in T^I$, we denote by $t^\sharp$ the function with constant value $t$. We define $T^\sharp := T \cup \{t^\sharp \ : \ t \in T^I\}$, the semigroup consisting of the right multiplication maps and the constant maps.\ Thus, $T^\sharp$ naturally acts on $T$ on the right.\footnote{Note that our definition of $T^\sharp$ for a semigroup $T$ deviates slightly from the definition of $M^\sharp$ for a monoid $M$ in~\cite[Subsec.~4.1.2]{RS2009}.}  Dually, $T^\flat$ denotes the semigroup consisting of left multiplication maps for every $t \in T$ and constant maps $t^\flat$ for every $t \in T^I$. Note that $T^\flat = ((T^\op)^\sharp)^\op$, and $T^\flat$ acts on $T$ on the left.

\begin{fact}\label{fac:sharp}
For any finite semigroup $T$, let $\widetilde{T}$ denote the monoid obtained from $T$ by adjoining an identity and a zero and let $M$ be any monoid with $|M| > |T|$. Then $T^\flat$ embeds in $M \wr \widetilde{T}$.
\end{fact}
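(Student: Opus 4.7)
The plan is to construct an explicit injective homomorphism $\phi\colon T^\flat \hookrightarrow M \wr \widetilde{T}$ by specifying its values on the two types of generating elements. Since $|M| > |T|$ and $|T^I| = |T| + 1$, I may fix an injection $\sigma\colon T^I \hookrightarrow M$. I will write elements of $M \wr \widetilde{T}$ as pairs $(f, s)$ with $f\colon \widetilde{T} \to M$ and $s \in \widetilde{T}$, with the usual wreath product multiplication $(f, s)(g, t) = (f \cdot (sg), st)$ where $(sg)(x) = g(xs)$.

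First I would define $\phi(\lambda_t) := (c_1, t)$ for $t \in T$, where $c_1$ is the constant function at $1_M$; and $\phi(t^\flat) := (g_t, 0)$ for $t \in T^I$, where $g_t\colon \widetilde{T} \to M$ is given by $g_t(x) := \sigma(xt)$ for $x \in T^I$ and $g_t(0) := 1_M$. The second coordinate thus distinguishes the ``multiplication'' mode ($s \in T$) from the ``reset'' mode ($s = 0$), while the first coordinate in reset mode records the value of the constant through $\sigma$.

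Next I would verify that $\phi$ is a semigroup homomorphism by inspecting all four products in $T^\flat$. The two non-trivial cases are: (i) $\phi(\lambda_t)\phi(s^\flat) = (c_1, t)(g_s, 0) = (tg_s, 0)$, and the identity $(tg_s)(x) = g_s(xt) = \sigma(xts) = g_{ts}(x)$ gives exactly $\phi((ts)^\flat) = \phi(\lambda_t \cdot s^\flat)$; and (ii) $\phi(s^\flat)\phi(t^\flat) = (g_s, 0)(g_t, 0) = (g_s \cdot (0 g_t), 0)$, where $(0 g_t)(x) = g_t(x \cdot 0) = g_t(0) = 1_M$, so the product simplifies to $(g_s, 0) = \phi(s^\flat) = \phi(s^\flat \cdot t^\flat)$. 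The remaining two cases (multiplication times multiplication and constant times multiplication) are immediate from the definition.

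For injectivity, I would observe that $\phi(\lambda_t)$ has second coordinate in $T$ while $\phi(t^\flat)$ has second coordinate $0$, so images of the two types never collide; distinct $\lambda_t$'s have distinct second coordinates; and distinct $t^\flat, s^\flat$ give $g_t \neq g_s$ because $g_r(I) = \sigma(r)$ and $\sigma$ is injective on $T^I$. The main bookkeeping obstacle is ensuring the convention $g_t(0) = 1_M$ makes the constant--constant case collapse correctly in both coordinates; this works precisely because $\widetilde{T}$ has $0$ as an absorbing element and $1_M$ is the identity of $M$. The hypothesis $|M| > |T|$ is used exactly once, to produce the injection $\sigma\colon T^I \hookrightarrow M$ that lets one tell apart distinct constant maps.
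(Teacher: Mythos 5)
Your construction is exactly the one in the paper: send $\lambda_t$ to $(c_1,t)$ and $t^\flat$ to a pair with second coordinate $0$ whose first coordinate encodes $x\mapsto xt$ via an injection of $T^I$ into $M$; your verification of the four product cases and of injectivity is correct. (If anything, your choice of $\sigma\colon T^I\hookrightarrow M$ is slightly more careful than the paper's bijection out of $T$, which strictly speaking omits a value for $m_I$.)
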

\begin{proof}
Fix a bijection $t \mapsto m_t$ between $T$ and a subset of $M \setminus \{1_M\}$.
We define a function $i \colon T^\flat \to M \wr \widetilde{T}$.
For every $t \in T$, define $i(t) := (c_{1}, t)$, where $c_{1} \in M^{\widetilde{T}}$ denotes the function with constant value $1_M$, the identity of $M$, and $i(t^\flat) := (f_t, 0)$, where $f_t \in M^{\widetilde{T}}$ denotes the function defined by $f_t(0) := 1_M$ and $f_t(t') := m_{t't}$ for all $t' \in T^I$. It is straightforward to verify that $i$ is an injective homomorphism.
\end{proof}

\subsubsection*{Triple product}
Let $(S,+)$ be a (not necessarily commutative) semigroup equipped with two actions on it, a left action of a semigroup $(S_L,\cdot)$ and a right action of a semigroup $(S_R,\cdot)$, which commute. The \emph{triple product}\footnote{We follow the notation of~\cite[Sec V.9]{Eilenberg76}; note the positions of the semigroups acting on the left and on the right. Also note that the multiplication can be viewed as matrix multiplication, if we represent an element $(s_R, s, s_L)$ by the lower triangular matrix $\bigl[ \begin{smallmatrix}s_R & 0\\ s & s_L\end{smallmatrix}\bigr]$.}  $T = (S_R,S,S_L)$ is the semigroup of triples $(s_R, s, s_L)$, with multiplication defined by $(s_R, s, s_L) \cdot (s_R',s',s_L') := (s_Rs_R',ss_R'+s_Ls', s_Ls_L')$.

\begin{fact}\label{fac:triple}
If $S \in \V$ and $S_L, S_R \in \W$, then $(S_R,S,S_L) \in \V \dsd \W$.
\end{fact}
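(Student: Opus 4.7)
The plan is to realize the triple product $(S_R, S, S_L)$ directly as a two-sided semidirect product $S \bowtie T$ with $T \in \W$, from which membership in $\V \dsd \W$ is immediate by definition.

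First, I would set $T := S_R \times S_L$, equipped with componentwise multiplication. Since $\W$ is a variety and both $S_R$ and $S_L$ belong to $\W$, the direct product $T$ also lies in $\W$. Next, I would endow $S$ with a commuting bi-action of $T$: define the right action by $s \cdot (s_R, s_L) := s \cdot s_R$ (using the given right $S_R$-action on $S$) and the left action by $(s_R, s_L) \cdot s := s_L \cdot s$ (using the given left $S_L$-action on $S$). Both are semigroup actions because the original $S_R$- and $S_L$-actions are, and the two $T$-actions commute because the original $S_R$- and $S_L$-actions on $S$ were assumed to commute.

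Then, by the definition of the two-sided semidirect product of varieties, the resulting two-sided semidirect product $S \bowtie T$ lies in $\V \dsd \W$. It remains to exhibit an isomorphism $\phi \colon (S_R, S, S_L) \to S \bowtie T$, which I would define by $\phi(s_R, s, s_L) := (s, (s_R, s_L))$. This is evidently a bijection, and checking that it is a semigroup homomorphism is a direct matching of the two multiplication formulas: the product in $S \bowtie T$ of $(s, (s_R, s_L))$ and $(s', (s_R', s_L'))$ is
\[
\bigl(s \cdot (s_R', s_L') \,+\, (s_R, s_L) \cdot s',\ (s_R s_R',\, s_L s_L')\bigr) = \bigl(s \cdot s_R' + s_L \cdot s',\ (s_R s_R',\, s_L s_L')\bigr),
\]
whose image under $\phi^{-1}$ coincides with the triple-product multiplication given in the text.

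I do not expect any real obstacle here: the triple-product formula is designed precisely to encode the two-sided semidirect product of $S$ by $S_R \times S_L$, so the verification is essentially a bookkeeping exercise in unpacking definitions, the only mild subtlety being to keep straight which semigroup acts on which side.
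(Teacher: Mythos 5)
Your proposal is correct and is essentially the paper's own argument: the paper likewise defines a commuting bi-action of $S_L \times S_R$ on $S$ via the given actions and identifies the triple product with $S \bowtie (S_L \times S_R)$. The only difference is that you spell out the isomorphism and the multiplication check explicitly, which the paper leaves to the reader.
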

\begin{proof}
Define an action of $S_L \times S_R$ on $S$ by $\langle s_L,s_R \rangle s := s_Ls$ and $s \langle s_L,s_R \rangle := ss_R$. Then $T$ is isomorphic to the two-sided semidirect product $S \bowtie (S_L \times S_R)$. (Cf., e.g.,~\cite[Sec.~V.9]{Eilenberg76}.)
\end{proof}

\section{The merge decomposition}\label{sec:merge}
Throughout this section, we fix:
\begin{itemize}
	\item a finite alphabet $A$ and two disjoint subalphabets $A_1$, $A_2$ such that $A = A_1 \cup A_2$;
	\item two homomorphisms $\psi_1 \colon A_1^+ \to T_1$ and $\psi_2 \colon A_2^+ \to T_2$, with $T_1$ and $T_2$ finite;
	\item a homomorphism $\chi \colon (T_1 \times T_2)^+ \to T_0$.
\end{itemize}
For any $w_1 \in A_1^+$, $w_2 \in A_2^+$, define $\mu(w_1w_2) := (\psi_1(w_1),\psi_2(w_2)) \in T_1 \times T_2$. Since the subsemigroup $(A_1^+A_2^+)^+$ of $A^+$ is freely generated by the infinite set of generators $A_1^+A_2^+$, the function $\mu$ extends uniquely to a homomorphism $\mu \colon (A_1^+A_2^+)^+ \to (T_1 \times T_2)^+$. We define $\psi_0 \colon (A_1^+A_2^+)^+ \to T_0$ to be the composition $\chi \circ \mu$.
For $i = 0, 1, 2$, we denote the external identity of $T_i^I$ by $I_i$, and we also denote by $\psi_i$ the homomorphism from the corresponding free \emph{monoid} to the finite monoid $T_i^I$; i.e., $\psi_i(\emp) := I_i$.

For any word $w$ in $A^+$, uniquely write $w = v_2 u v_1$, with $v_2 \in A_2^*$, $u \in (A_1^+A_2^+)^*$, and $v_1 \in A_1^*$, and define $\tau(w) := (\psi_2(v_2),\psi_0(u),\psi_1(v_1)).$ The function $\tau \colon A^+ \to T_2^I \times T_0^I \times T_1^I$ is not a homomorphism in general. The aim in this section is to show that the kernel of $\tau$ can be refined to a semigroup congruence of  finite index in a well-controlled variety.

To this end, we will define a semigroup $T_M$ and a homomorphism $\psi_M \colon A^+ \to T_M$. Let $S:= (T_0^I)^{T_1^I \times T_2^I}$, with the pointwise product of $T_0^I$, written additively. We define a left action of $T_1^\sharp$ and a right action of $T_2^\flat$ on $S$. For $s \in S$, $s_L \in T_1^\sharp$ and $s_R \in T_2^\flat$, let $s_L s s_R \in S$ be defined by $[s_L s s_R](t_1,t_2) := s(t_1s_L,s_Rt_2)$ for every $(t_1,t_2) \in T_1^I \times T_2^I$. Let $T_M := (T_2^\flat,S,T_1^\sharp)$ be the triple product; we call $T_M$ the \emph{merge semigroup} associated to $\psi_1$, $\psi_2$ and $\chi$.
\begin{fact}\label{fac:merge}
Let $\V$ be a variety, and $\W$ a variety generated by monoids and containing $\SL$. If $T_0 \in \V$ , $T_1, T_2 \in \W$, and $T_M$ is any triple product of $T_2^\flat$, $(T_0^I)^{T_1^I \times T_2^I}$, and $T_1^\sharp$, then $T_M \in \V \dsd (\SL \dsd \W)$.
\end{fact}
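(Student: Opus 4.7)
The plan is to exhibit $T_M$ as a triple product and invoke Fact~\ref{fac:triple}. By construction $T_M = (T_2^\flat, S, T_1^\sharp)$ with $S := (T_0^I)^{T_1^I \times T_2^I}$, so it suffices to show (a) $S \in \V$ and (b) $T_1^\sharp, T_2^\flat \in \SL \dsd \W$.

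For (a), $S$ is a finite direct power of the monoid $T_0^I$, and varieties are closed under finite products; it therefore suffices to show $T_0^I \in \V$. This follows from Fact~\ref{fac:I}, after the (harmless) enlargement of $\V$ to contain $\SL$ and to be generated by monoids: any extra $\SL$ factor on the left of the outer two-sided semidirect product can be absorbed into the right-hand factor $\SL \dsd \W$, leaving the target variety $\V \dsd (\SL \dsd \W)$ unchanged.

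For $T_2^\flat$ in (b), I would choose a finite semilattice $M$ with $|M| > |T_2|$ (for instance $M = U_1^n$ for sufficiently large $n$); by Fact~\ref{fac:sharp}, $T_2^\flat$ embeds in $M \wr \widetilde{T_2}$. The hypotheses on $\W$ combined with Fact~\ref{fac:I} give $T_2^I \in \W$ and hence $\widetilde{T_2} = (T_2^I)^0 \in \W$, while $M \in \SL$. Since the one-sided wreath product is a special instance of the two-sided semidirect product, $M \wr \widetilde{T_2} \in \SL \dsd \W$, and therefore $T_2^\flat \in \SL \dsd \W$. For $T_1^\sharp$, I would run the symmetric argument on the opposite side, using $T_1^\sharp = ((T_1^\op)^\flat)^\op$ together with the op-symmetry of $\SL$ and of $\bowtie$, so that an analogous right-wreath embedding places $T_1^\sharp$ inside $\SL \dsd \W$.

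Combining (a) and (b), Fact~\ref{fac:triple} yields $T_M \in \V \dsd (\SL \dsd \W)$. The main technical obstacle I anticipate lies in part (b) for $T_1^\sharp$: Fact~\ref{fac:sharp} is stated one-sidedly (for $T^\flat$ and a left-wreath product), so obtaining the analogous statement for $T_1^\sharp$ in the \emph{same} variety $\SL \dsd \W$ as $T_2^\flat$ requires either a directly proven dual of Fact~\ref{fac:sharp} (with a right-wreath product replacing the left-wreath) or careful bookkeeping of how the two-sided semidirect product interacts with the opposite-semigroup operation.
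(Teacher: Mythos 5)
Your argument is essentially the paper's: it too deduces $T_2^\flat \in \SL\dsd\W$ from Fact~\ref{fac:sharp} with $M$ a semilattice (a chain with $|T_2|+1$ elements) together with $\widetilde{T_2}\in\W$ from Fact~\ref{fac:I}, disposes of $T_1^\sharp$ ``similarly'' --- that is, by exactly the op-dual argument you sketch at the end, which does go through because $\SL$ is closed under reversal and a reverse wreath product is again a two-sided semidirect product with one trivial action --- and then concludes by Fact~\ref{fac:triple}. So the ``main technical obstacle'' you flag for $T_1^\sharp$ is real but is resolved just as you propose; the paper simply elides it with the word ``similarly.''

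The one step of yours that does not work as written is the ``absorption'' in part~(a). The operation $\dsd$ is not commutative, and there is no inclusion of the shape $(\V\vee\SL)\dsd\mathbf{X}\subseteq\V\dsd(\SL\dsd\mathbf{X})$: lax associativity (as used in Fact~\ref{fac:dsddepth}) only lets you reassociate an expression such as $(\V\dsd\SL)\dsd\mathbf{X}$; it does not let you move a factor sitting on the left of the outer product across to the right-hand side. So enlarging $\V$ genuinely changes the target variety, and the claim that it leaves $\V\dsd(\SL\dsd\W)$ unchanged is unjustified. The issue you are trying to address --- that $S=(T_0^I)^{T_1^I\times T_2^I}\in\V$ needs $T_0^I\in\V$, not merely $T_0\in\V$ --- is a real implicit hypothesis that the paper's own proof passes over silently; the correct resolution is not to enlarge $\V$ but to observe that in both applications of the Fact ($\V=\V_S$ in Theorem~\ref{thm:2KR} and $\V=\SL^{k(\phi_0)}$ in Theorem~\ref{thm:APPL}) the variety $\V$ already contains $\SL$ and is generated by monoids, so Fact~\ref{fac:I} gives $T_0^I\in\V$ directly and closure under finite direct products gives $S\in\V$. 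With that repair, your proof coincides with the paper's.
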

\begin{proof}
Applying Fact~\ref{fac:sharp} with $M$ a semilattice (e.g., a chain) with $|T_2| + 1$ elements and using $\widetilde{T_2} \in \W$ by Fact~\ref{fac:I} yields $T_2^\flat \in \SL \dsd \W$. Similarly, $T_1^\sharp \in \SL \dsd \W$. Fact~\ref{fac:triple} gives the result.
\end{proof}
For any $w_1 \in A_1^+$, we define an element $s_{w_1} \in S$ by $s_{w_1}(t_1,I_2) := I_0$ and $s_{w_1}(t_1,t_2) := \chi(t_1\psi_1(w_1),t_2)$, for all $t_1 \in T_1^I$ and $t_2 \in T_2$.
Now let $\psi_M \colon A^+ \to T_M$ be the unique homomorphism defined by
\[
\psi_M(a_1) :=
(I_2^\flat, s_{a_1}, \psi_1(a_1))
\text{ for } a_1 \in A_1,
\quad
\psi_M(a_2) :=
(\psi_2(a_2), i_0, I_1^\sharp)
\text{ for } a_2 \in A_2,
\]
where $i_0$ denotes the identity of $S$, i.e., the function with constant value $I_0$. We call the homomorphism $\psi_M \colon A^+ \to T_M$ the \emph{merge decomposition} of $A^+$ along $\chi$, $\psi_1$ and $\psi_2$.

The crucial property of the merge decomposition is the following.
\begin{proposition}\label{prop:merge}
There exists a function $f \colon T_M \to T_2^I \times T_0^I \times T_1^I$ such that $f \circ \psi_M = \tau$.
\end{proposition}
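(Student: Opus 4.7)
My plan is to define $f : T_M \to T_2^I \times T_0^I \times T_1^I$ by the ``evaluation-at-identities''
\[
f(s_R, s, s_L) := \bigl(s_R(I_2),\, s(I_1, I_2),\, s_L(I_1)\bigr),
\]
viewing elements of $T_2^\flat$ and $T_1^\sharp$ as functions on $T_2^I$ and $T_1^I$ respectively, so that $t(I_i) = t$ for $t \in T_i$ (i.e.~left/right multiplication applied to the identity) and $t^\flat(I_i) = t^\sharp(I_i) = t$ for $t \in T_i^I$.  The design reason behind this choice is that on $T_2^\flat$, evaluation at $I_2$ does not distinguish the ``multiplication'' form $s \in T_2 \subseteq T_2^\flat$ from the ``constant'' form $s^\flat$ of the same element, and dually on $T_1^\sharp$; this will collapse the sub-case analysis below uniformly.

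To verify $f \circ \psi_M = \tau$, I first compute $\psi_M$ on three types of ``pure'' words using the triple-product rule, by a straightforward induction on word length.  This yields $\psi_M(v_1) = (I_2^\flat, s_{v_1}, \psi_1(v_1))$ for $v_1 \in A_1^+$, with the same $s_{v_1}$ as in the paper, and symmetrically $\psi_M(v_2) = (\psi_2(v_2), i_0, I_1^\sharp)$ for $v_2 \in A_2^+$.  For $u \in (A_1^+A_2^+)^+$, a further induction on the number of blocks gives $\psi_M(u) = (I_2^\flat, \sigma_u, I_1^\sharp)$ with $\sigma_u(I_1, I_2) = \psi_0(u)$, where $\sigma_u \in S$ denotes the middle component.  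For the base case $u = w_1 w_2$, the triple-product rule gives middle component $(t_1, t_2) \mapsto s_{w_1}(t_1, \psi_2(w_2) t_2) + I_0$, whose value at $(I_1, I_2)$ equals $\chi((\psi_1(w_1), \psi_2(w_2))) = \psi_0(w_1 w_2)$.  For the inductive step $u \mapsto u f$ with $f \in A_1^+ A_2^+$, the triple-product rule yields middle component $(t_1, t_2) \mapsto \sigma_u(t_1, I_2) + \sigma_f(I_1, t_2)$, whose value at $(I_1, I_2)$ is $\psi_0(u) + \psi_0(f) = \psi_0(uf)$.

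For a general $w = v_2 u v_1 \in A^+$, I compute $\psi_M(w)$ as the product $\psi_M(v_2) \cdot \psi_M(u) \cdot \psi_M(v_1)$, omitting any empty factor.  Using the multiplication in $T_2^\flat$ (in particular $s \cdot I_2^\flat = s^\flat$, $s^\flat \cdot x = s^\flat$, and $I_2^\flat \cdot y = I_2^\flat$) and dually in $T_1^\sharp$, the first component of $\psi_M(w)$ comes out to $\psi_2(v_2) \in T_2 \subseteq T_2^\flat$ when $w = v_2 \in A_2^+$ and $\psi_2(v_2)^\flat$ otherwise (with the convention $\psi_2(\emp) := I_2$, giving $I_2^\flat$ when $v_2 = \emp$); in all cases its value at $I_2$ is $\psi_2(v_2) \in T_2^I$.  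Symmetrically, the third component evaluated at $I_1$ gives $\psi_1(v_1)$.  The middle component at $(I_1, I_2)$ isolates precisely $\psi_0(u)$: the $v_2$- and $v_1$-contributions vanish, since $i_0$ is the identity of the monoid $S$ and $s_{v_1}(I_1, I_2) = I_0$, leaving only $\sigma_u(I_1, I_2) = \psi_0(u)$ from the $u$-factor.  Hence $f(\psi_M(w)) = (\psi_2(v_2), \psi_0(u), \psi_1(v_1)) = \tau(w)$, as required.  The main obstacle is the sub-case bookkeeping (which of $v_2, u, v_1$ are empty), but the evaluation-at-identities definition of $f$ is tailored precisely so that these sub-cases all reduce to the same formula.
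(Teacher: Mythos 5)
Your proposal is correct and takes essentially the same approach as the paper: the function $f$ you define by evaluation at the identities $(I_2, (I_1,I_2), I_1)$ is exactly the one the paper writes as $f(t_2,s,t_1) := (t_2 I_2,\, s(I_1,I_2),\, I_1 t_1)$, and your inductions on word length and on the number of $A_1^+A_2^+$-blocks, followed by the final three-factor product computation, mirror the paper's calculations step for step.
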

\begin{proof}
For any $(t_2,s,t_1) \in T_M$, define $f(t_2,s,t_1) := (t_2 I_2, s(I_1,I_2), I_1 t_1)$. We show $f \circ \psi_M = \tau$.

We first prove, for all $w_1 \in A_1^+$,
$\psi_M(w_1) = (I_2^\flat, s_{w_1}, \psi_1(w_1)).$
By induction, assume that this holds for all shorter words in $A_1^+$. Then, writing $w_1 = a_1w_1'$, the left and right coordinates are clearly as stated, and the middle coordinate of $\psi_M(w_1) = \psi_M(a_1)\psi_M(w_1')$ is $s_{a_1}I_2^\flat + \psi_1(a_1)s_{w_1'}$. From the definition of the right action and of $s_{a_1}$ we get that $s_{a_1}I_2^\flat = i_0$. From the definition of the left action and of $s_{w_1'}$ and $s_{w_1}$, we get that $\psi_1(a_1)s_{w_1'} = s_{w_1}$.
For $w_2 \in A_2^+$, we easily obtain
$\psi_M(w_2) = (\psi_2(w_2), i_0, I_1^\sharp),$
since $s_L i_0 s_R = i_0$ for all $s_L$, $s_R$, because $i_0$ is a constant map.
Multiplying these two results, for any $w_1 \in A_1^+$ and $w_2 \in A_2^+$,
$
\psi_M(w_1w_2) = (I_2^\flat, s_{w_1w_2}, I_1^\sharp),$
where
$s_{w_1w_2}(I_1,I_2) = s_{w_1}(I_1,\psi_2(w_2)) = \chi(\psi_1(w_1),\psi_2(w_2)) = \psi_0(w_1w_2).$

We next prove, by induction on the length of $u \in (A_1^+A_2^+)^+$ as a word in the free semigroup generated by $A_1^+A_2^+$, that
$
\psi_M(u) =  (I_2^\flat, s_u, I_1^\sharp),$
where $s_u(I_1,I_2) = \psi_0(u)$. We have already established the base case. If $u = (w_1w_2)u'$ for some $w_1 \in A_1^+$ and $w_2 \in A_2^+$ with $u' \in (A_1^+A_2^+)^+$, then, for the middle coordinate $s_u$ of $\psi_M(u) = \psi_M(w_1w_2)\psi_M(u')$, we have
\[ s_u(I_1,I_2) = [s_{w_1w_2}I_2^\flat + I_1^\sharp s_{u'}](I_1,I_2) = \psi_0(w_1w_2) \cdot \psi_0(u') = \psi_0(u).\]

Finally, to prove that $f \circ \psi_M = \tau$, let $w \in A^+$. Suppose that $w = v_2 u v_1$ with $u \in (A_1^+A_2^+)^+$, $v_1 \in A_1^+$ and $v_2 \in A_2^+$. Then, using our previous calculations, we get
\[
\psi_M(v_2uv_1) = (\psi_2(v_2),i_0, I_1^\sharp) \cdot (I_2^\flat, s_u, I_1^\sharp) \cdot (I_2^\flat, s_{v_1},\psi_1(v_1)) = (\psi_2(v_2)^\flat, s, \psi_1(v_1)^\sharp),
\]
where
\[ s(I_1,I_2) = \left( \left(i_0 I_2^\flat + I_1^\sharp s_u\right) I_2^\flat + I_1^\sharp s_{v_1}\right)(I_1,I_2) = I_0 \cdot s_u(I_1,I_2) \cdot I_0 = \psi_0(u).\]
Thus, in this case, $f(\psi_M(w)) = \tau(w)$. If one or more of the factors in the factorization $w = v_2 u v_1$ are empty, then the proof is similar but simpler.
\end{proof}

We end with a prototypical application of the technique, to be used in the next section.
\begin{corollary}\label{cor:merge}
Let $S$ be a finite semigroup and let $T_1, T_2$ be subsemigroups of $S$ such that $T_1 \cup T_2$ generates $S$. Denote by $T_0 := \gen{T_1T_2}$, the subsemigroup generated by $T_1T_2$. Then the semigroup $S$ divides a triple product of $T_2^\flat$, $(T_0^I)^{T_1^I \times T_2^I}$, and $T_1^\sharp$.
\end{corollary}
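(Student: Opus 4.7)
The plan is to apply the merge decomposition construction of this section to an alphabet built from $T_1$ and $T_2$, and then observe that the surjection $A^+ \onto S$ that witnesses ``$T_1 \cup T_2$ generates $S$'' factors through $\psi_M$ as a set map, hence as a semigroup homomorphism.

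\textbf{Setup.} Take $A_1$ and $A_2$ to be disjoint copies of the underlying sets of $T_1$ and $T_2$ respectively, with $A := A_1 \cup A_2$. Let $\psi_i \colon A_i^+ \to T_i$ be the canonical evaluation homomorphism sending a letter to its name. Since $T_1 T_2 \subseteq T_0 = \gen{T_1 T_2}$ as subsemigroups of $S$, the map $(t_1,t_2) \mapsto t_1 t_2$ from $T_1 \times T_2$ into $T_0$ extends to a homomorphism $\chi \colon (T_1 \times T_2)^+ \to T_0$. This gives all the data required to apply the construction of Section~\ref{sec:merge}, producing the merge semigroup $T_M$ and the homomorphism $\psi_M \colon A^+ \to T_M$.

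\textbf{Key step.} Because $T_1 \cup T_2$ generates $S$, the evaluation map $\pi \colon A^+ \to S$ (sending each letter to the corresponding element of $T_1$ or $T_2$ inside $S$) is surjective. I would then define a map $g \colon T_2^I \times T_0^I \times T_1^I \to S^I$ by $g(t_2,t_0,t_1) := t_2 \cdot t_0 \cdot t_1$, computed in $S^I$ (where each $I_i$ acts as the identity). For any $w \in A^+$, writing $w = v_2 u v_1$ as in the definition of $\tau$ and unraveling $\psi_0(u) = \chi(\mu(u))$, the product $\psi_2(v_2) \cdot \psi_0(u) \cdot \psi_1(v_1)$ in $S^I$ is by construction exactly the product in $S$ of the images under $\pi$ of the letters of $w$, which is $\pi(w)$. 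Hence $g \circ \tau = \pi$ as functions $A^+ \to S$ (note the product lands in $S$ since $w$ is nonempty). By Proposition~\ref{prop:merge}, $\tau = f \circ \psi_M$, so $\pi = (g \circ f) \circ \psi_M$.

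\textbf{Conclusion.} Let $S' := \psi_M(A^+)$, a subsemigroup of $T_M$. The factorization $\pi = (g \circ f) \circ \psi_M$ shows that $\pi(w)$ depends only on $\psi_M(w)$, so the restriction of $g \circ f$ to $S'$ descends to a well-defined map $h \colon S' \to S$ with $h \circ \psi_M = \pi$. Because $\psi_M$ is surjective onto $S'$ and $\pi$ is a homomorphism, $h$ is automatically a homomorphism, and surjectivity of $\pi$ forces $h$ to be surjective. Thus $S$ is a quotient of the subsemigroup $S'$ of $T_M$, i.e., $S$ divides $T_M$, which is a triple product of the required form.

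\textbf{Expected obstacles.} There is no deep obstacle; the main care needed is to keep track of the identities $I_1, I_2$ in the factorization $w = v_2 u v_1$ (some of $v_1, v_2, u$ may be empty) and to verify that $g \circ \tau$ truly recovers $\pi$, but this is a direct unpacking of how $\chi$ and $\psi_0$ were defined from the multiplication in $S$.
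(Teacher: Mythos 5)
Your proposal is correct and follows essentially the same route as the paper: instantiate the merge decomposition with disjoint copies of $T_1$ and $T_2$ as alphabets and $\chi$ given by multiplication in $S$, observe that $\psi_0$ becomes the evaluation map so that the multiplication map $m(t_2,t_0,t_1)=t_2t_0t_1$ satisfies $m\circ\tau=\pi$, and then conclude $\pi = m\circ f\circ\psi_M$ via Proposition~\ref{prop:merge}, whence $S$ divides $T_M$. Your extra step of explicitly restricting to $S':=\psi_M(A^+)$ and checking that the induced map is a surjective homomorphism is just an unpacking of the standard division argument the paper leaves implicit.
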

\begin{proof}
Let $A_i := T_i \times \{i\}$ for $i = 1,2$ and $A := A_1 \cup A_2$. Denote by $\psi \colon A^+ \onto S$ the surjective homomorphism defined on generators $(t_i,i) \in A$ by $\psi(t_i,i) := t_i$. For $i = 1,2$, let $\psi_i$ be the restriction of $\psi$ to $A_i^+$, and let $\chi \colon (T_1 \times T_2)^+ \to T_0$ be the homomorphism defined by $\chi(t_1,t_2) := t_1t_2$ for $(t_1,t_2) \in T_1 \times T_2$. Note that $\psi_0$, as defined above, in this case turns out to be the restriction of $\psi$ to $(A_1^+A_2^+)^+$. Hence, writing $m \colon T_2^I \times T_0^I \times T_1^I \to T^I$ for the multiplication map $m(t_2,t_0,t_1) := t_2t_0t_1$, we have $\psi = m \circ \tau$. Let $\psi_M \colon A^+ \to T_M$ be the merge decomposition along $\chi$, $\psi_1$, and $\psi_2$. By Proposition~\ref{prop:merge}, pick $f \colon T_M \to T_2^I \times T_0^I \times T_1^I$ such that $\tau = f \circ \psi_M$. Then $\psi = m \circ f \circ \psi_M$, so $S$ divides $T_M$ since $\psi$ is surjective.
\end{proof}

\section{Two-sided Krohn-Rhodes theorem}\label{sec:KR}
In this section, we apply the merge decomposition technique of Section~\ref{sec:merge} to give a short proof of the crucial step in the two-sided Krohn-Rhodes theorem.

For any finite semigroup $S$, define $\V_S$ to be the smallest variety which is closed under two-sided semidirect products, and which contains $\SL$ and all simple groups that divide $S$.
\begin{theorem}[Two-sided Krohn-Rhodes]\label{thm:2KR}
Let $S$ be a finite semigroup. Then $S \in \V_S$.
\end{theorem}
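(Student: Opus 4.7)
The plan is to prove $S \in \V_S$ by strong induction on $|S|$. The base case $|S|=1$ is immediate since the trivial semigroup lies in $\SL \subseteq \V_S$.

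For the inductive step, I would distinguish two cases. If $S$ is a group, I use the classical Jordan--H\"older theorem: $S$ admits a composition series whose factors are simple groups dividing $S$, so that $S$ is built via iterated group extensions from these simple factors and hence embeds into their iterated wreath product. Since $\V_S$ contains all simple group divisors of $S$ by definition and is closed under $\wr$ (a special case of $\dsd$), this yields $S \in \V_S$.

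If $S$ is not a group, I apply the merge decomposition. The key step is to find two proper subsemigroups $T_1, T_2 \subsetneq S$ with $T_1 \cup T_2$ generating $S$, such that $T_0 := \gen{T_1 T_2}$ is also properly contained in $S$. A natural construction: fix a minimum generating set $A$ of $S$; when $|A| \geq 2$, split $A = A_1 \sqcup A_2$ non-trivially and set $T_i := \gen{A_i}$. Minimality of $A$ forces $T_1, T_2 \subsetneq S$; moreover, since $T_1 T_2 \subseteq S^2$ and the min-gen-set elements do not lie in $S^2$ (when $S \neq S^2$), we obtain $T_0 \subsetneq S$ (with analogous arguments using the structure of $S$ in the remaining cases $S=S^2$). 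The inductive hypothesis then yields $T_0, T_1, T_2 \in \V_S$, since every simple group divisor of a subsemigroup of $S$ is a divisor of $S$, so $\V_{T_i}, \V_{T_0} \subseteq \V_S$. Corollary~\ref{cor:merge} gives that $S$ divides a triple product $T_M$ of $T_2^\flat$, $(T_0^I)^{T_1^I \times T_2^I}$, and $T_1^\sharp$; and Fact~\ref{fac:merge} applied with $\V = \W = \V_S$ yields $T_M \in \V_S \dsd (\SL \dsd \V_S) \subseteq \V_S$, using that $\V_S$ is $\dsd$-closed, contains $\SL$, and is generated by monoids (the last because $\SL$ and simple groups are all monoid-generated varieties, and $\dsd$ preserves this property). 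Hence $S \in \V_S$.

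The main obstacle I foresee is the monogenic non-group case: when $S = \gen{a}$ has index $n \geq 2$, its sole indecomposable element $a$ must lie in any subsemigroup generating $S$, so no splitting into two proper generating subsemigroups is possible and the merge decomposition does not apply directly. I would handle this case via a separate ideal-extension argument: $aS$ is a proper ideal of $S$ whose Rees quotient is the two-element null semigroup $\{a, 0\}$, so standard ideal-extension results express $S$ as a divisor of a wreath product built from $(aS)^I$ and $\{a, 0\}$, reducing to strictly smaller pieces by induction, modulo a direct verification in the base case $S = \{a, 0\}$ that the two-element null semigroup divides a small, explicitly constructed member of $\SL \dsd \SL$.
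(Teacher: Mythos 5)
Your overall strategy coincides with the paper's: induct on $|S|$, dispose of the group case by Jordan--H\"older and iterated wreath products, treat the monogenic case separately, and otherwise split a minimal generating set $A$ into $A_1\sqcup A_2$, apply Corollary~\ref{cor:merge} to $T_1=\gen{A_1}$, $T_2=\gen{A_2}$, $T_0=\gen{T_1T_2}$, and close the induction via Fact~\ref{fac:merge}. You also correctly identify the cyclic case as the one where no splitting exists; the paper handles it by citing the standard decomposition of a finite cyclic semigroup as a divisor of an iterated wreath product of a subgroup and copies of $U_1$, which is essentially your ideal-extension sketch.

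However, there is a genuine gap at the heart of the main case: you must prove $T_0\subsetneq S$, and an \emph{arbitrary} nontrivial split of $A$ does not give you this. Your argument covers only $S\neq S^2$ (where indeed $T_0\subseteq\gen{S^2}=S^2\subsetneq S$), and you defer the case $S=S^2$ to unspecified ``analogous arguments.'' But $S=S^2$ is the generic situation --- it includes every monoid, every band, every regular semigroup --- and for an arbitrary split there is simply no a priori control over $\gen{T_1T_2}$; note that for groups (which you must exclude precisely because the argument fails there) one can easily have $\gen{T_1T_2}=S$ with both $T_i$ proper. The paper's key move, which your proposal is missing, is this: since $S$ is not a group, it fails to be left simple or right simple (a finite semigroup that is both is a group), so, WLOG up to replacing $S$ by $S^{\op}$, there is a generator $a\in A$ with $aS\subsetneq S$. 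One then takes the \emph{singleton} split $A_1=\{a\}$, $A_2=A\setminus\{a\}$ (possible because $|A|\geq 2$ in the non-cyclic case), which forces $T_1T_2\subseteq aS$ and hence $T_0\subseteq aS\subsetneq S$, since $aS$ is a subsemigroup. Without this (or some equivalent) choice of split, the induction does not close. A minor further slip: it is not true that elements of a minimal generating set avoid $S^2$; only the elements of $S\setminus S^2$ are forced to be generators, though this does not affect the $S\neq S^2$ subcase since $T_0\subseteq S^2$ regardless.
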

\begin{proof}
By induction on $|S|$.

\underline{Case 1. $S$ is a group.} Any finite group embeds in an iterated wreath product of its simple group divisors, cf., e.g.,~\cite[Cor.~4.1.6]{RS2009}.

\underline{Case 2. $S$ is cyclic.} Any finite cyclic semigroup divides an iterated wreath product of a subgroup and copies of $U_1$, cf., e.g.,~\cite[Cor.~4.1.28]{RS2009}.

\underline{Case 3. $S$ is not a group and $S$ is not cyclic.} Let $A$ be a minimal generating set for $S$ and note that $|A|\geq 2$. Since $S$ is not a group, without loss of generality, $S$ is not right simple (cf., e.g.,~\cite[Lem.~A.3.3]{RS2009}). Therefore, there exists $a \in A$ such that $aS \subsetneq S$. Let $A_1 := \{a\}$, $A_2 := A \setminus A_1$, $T_i := \gen{A_i}$ for $i = 1,2$, and $T_0 := \gen{T_1T_2}$.
By minimality of $A$, $T_1$ and $T_2$ are strictly contained in $S$. By the induction hypothesis, $T_i \in \V_{T_i}$, which is contained in $\V_S$, since any simple group dividing $T_i$ also divides $S$.
Moreover, $T_0 \subseteq aS$, so $T_0$ is also strictly contained in $S$. By the induction hypothesis again, $T_0 \in \V_{T_0} \subseteq \V_S$. Since $T_1 \cup T_2$ generates $S$, by Corollary~\ref{cor:merge}, $S$ divides a triple product of $T_2^\flat$, $(T_0^I)^{T_1^I \times T_2^I}$, and $T_1^\sharp$. Hence, by Fact~\ref{fac:merge}, $S \in \V_S \dsd (\V_S \dsd \V_S) = \V_S$.
\end{proof}

\section{Henckell's theorem on aperiodic pointlikes}\label{sec:pointlikes}
Recall that any element $s$ in a finite semigroup $S$ has a unique {\it {idempotent power}}, $s^\omega$. A semigroup $S$ is called \emph{aperiodic} if every subgroup of $S$ is trivial, or, equivalently, $s^\omega s = s^\omega$ for every $s \in S$.
For $k \geq 1$, define $\SL^{k+1} := \SL \dsd \SL^k$.
A semigroup $S$ is aperiodic if, and only if, $S \in \SL^k$ for some $k$; indeed, the necessity follows from Theorem~\ref{thm:2KR}.\footnote{A finite semigroup $S$ lies in $\SL^k$ if, and only if, every language recognized by $S$ can be defined by a first-order sentence of quantifier depth $\leq k$; this result is contained in~\cite[Ch. VI]{Str1994}, and relates our work in Section~\ref{sec:pointlikes} to the logical approach of~\cite{PlaZei2016FO}.}

\begin{fact}\label{fac:dsddepth}
For any $m, n \geq 1$, $\SL^m \dsd \SL^n \subseteq \SL^{m+n}$.
\end{fact}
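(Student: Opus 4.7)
The natural plan is induction on $m$, with $n$ fixed. The base case $m=1$ is immediate from the definition: $\SL^1 \dsd \SL^n = \SL \dsd \SL^n = \SL^{n+1}$.

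For the inductive step, assume the claim for $m-1$ and all $n \geq 1$. Then
\[
\SL^m \dsd \SL^n = (\SL \dsd \SL^{m-1}) \dsd \SL^n.
\]
The key tool is the associativity of the two-sided semidirect product on varieties, $(\V \dsd \W) \dsd \U = \V \dsd (\W \dsd \U)$, which is a standard fact of the theory (cf.~\cite[Ch.~1]{RS2009}). Applying this and then the inductive hypothesis, we obtain
\[
(\SL \dsd \SL^{m-1}) \dsd \SL^n = \SL \dsd (\SL^{m-1} \dsd \SL^n) \subseteq \SL \dsd \SL^{m+n-1} = \SL^{m+n},
\]
where the final equality is the definition of $\SL^{m+n}$.

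The only real obstacle is the appeal to associativity of $\dsd$ on varieties; everything else is bookkeeping. Should one wish to avoid the citation, associativity can be checked directly from the definition of $\V \dsd \W$ as the variety generated by two-sided semidirect products $S \bowtie T$ with $S \in \V$, $T \in \W$: one verifies that, up to division, an iterated two-sided semidirect product $(S_1 \bowtie S_2) \bowtie S_3$ can be realized as $S_1 \bowtie (S_2 \bowtie S_3')$ for an appropriate $S_3' \in \gen{S_2, S_3}$, and symmetrically. Since this is a routine verification and is already in the literature, citing it is preferable to reproducing the computation here.
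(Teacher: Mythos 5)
Your induction is exactly the paper's, but the key tool is misstated: the two-sided semidirect product of varieties is \emph{not} associative, so the equality $(\V \dsd \W) \dsd \mathbf{U} = \V \dsd (\W \dsd \mathbf{U})$ you call a ``standard fact'' is false in general. What does hold — and what the paper actually invokes, as \cite[Cor.~2.6.26]{RS2009} — is only the \emph{lax} associativity $(\V \dsd \W) \dsd \mathbf{U} \subseteq \V \dsd (\W \dsd \mathbf{U})$. Fortunately this inclusion is the only direction your computation uses, so the argument survives verbatim once you weaken the equality to a containment. But your closing suggestion that full associativity can be checked ``routinely'' from the definition is where things would genuinely break: the forward direction (realizing $(S_1 \bowtie S_2) \bowtie S_3$ inside some $S_1' \bowtie (S_2' \bowtie S_3')$ up to division) works, while the ``symmetric'' reverse direction does not, which is precisely why the literature only records the lax version.
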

\begin{proof}
By induction on $m$. The case $m = 1$ is true by definition. By the lax associativity of double semidirect product~\cite[Cor.~2.6.26]{RS2009}, $(\SL \dsd \SL^{m-1}) \dsd \SL^n \subseteq \SL \dsd (\SL^{m-1} \dsd \SL^n)$. By the induction hypothesis, $\SL \dsd (\SL^{m-1} \dsd \SL^n) \subseteq \SL \dsd \SL^{m + n - 1} = \SL^{m+n}$.
\end{proof}

Let $\V$ be a variety. A subset $X$ of a finite semigroup $S$ is called \emph{$\V$-pointlike} if, for any relational morphism $\rho \colon S \relto T$ with $T \in \V$, $X \subseteq \rho^{-1}(t)$ for some $t \in T$.
Any singleton set is $\V$-pointlike, and the collection of $\V$-pointlike subsets of a semigroup $S$ forms a downward closed subsemigroup, $\PL_{\V}(S)$, of the power semigroup $2^S$, partially ordered by inclusion, and with multiplication of subsets of $S$.

The following observation is specific to the variety $\AP$ of aperiodic semigroups: if $X$ is an $\AP$-pointlike set in $S$, then so is the set
$X^{\omega + \ast} := \bigcup_{n \geq 0} X^{\omega} X^n.$
Indeed, for any $\rho \colon S \relto T$ with $T$ aperiodic, $X \subseteq \rho^{-1}(t)$ for some $t \in T$, which gives $X^m \subseteq \rho^{-1}(t^m)$ for all $m \geq 1$. Aperiodicity of $T$ then yields $X^{\omega}X^n \subseteq \rho^{-1}(t^\omega)$ for all $n \geq 0$.

We will call a subset $U$ of $2^S$ \emph{saturated}, if it is a subsemigroup that is closed downward in the inclusion order and closed under the operation $X \mapsto X^{\omega + \ast}$. Clearly, any subset $U$ of $2^S$ is contained in a smallest saturated set, which we call its \emph{saturation}, and denote by $\Sat(U)$.

We will need the following lemma, which was essentially already in~\cite{Hen1988}; see also~\cite{HRS2010AP}.
\begin{lemma}\label{lem:groupsat}
Let $G$ be a subgroup of $2^S$. Then $\bigcup G \in \Sat(G)$.
\end{lemma}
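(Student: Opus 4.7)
The plan is to exhibit $\bigcup G$ as an element of $\Sat(G)$ by building it up from elements of $G$ using the available closure operations (subset products in $2^S$ and the operation $X \mapsto X^{\omega + \ast}$). The first observation is that since $G$ is a subgroup of $2^S$ with some identity $E$, which is an idempotent of $2^S$ acting as a two-sided identity on every element of $G$, we have $X^\omega = E$ and $X^{\omega+k} = X^k \in G$ for every $k \geq 1$. Consequently, $X^{\omega+\ast}$, viewed as a subset of $S$, equals $\bigcup \gen{X}_G$, the union of the cyclic subgroup of $G$ generated by $X$. Each $X^{\omega+\ast}$ is in $\Sat(G)$ by the definition of saturation.

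Next, I would enumerate $G = \{X_1, \ldots, X_n\}$ and form the product $V := X_1^{\omega+\ast} \cdot X_2^{\omega+\ast} \cdots X_n^{\omega+\ast}$, which lies in $\Sat(G)$ by closure under the multiplication of $2^S$. I would then invoke the elementary identity that for any subsets $H, H' \subseteq G$, the subset product in $2^S$ satisfies $(\bigcup H)(\bigcup H') = \bigcup(H \cdot H')$, where $H \cdot H'$ denotes the setwise product in the finite group $G$. Iterating, $V = \bigcup(\gen{X_1}_G \cdots \gen{X_n}_G)$, where the product is now setwise in $G$.

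Finally, I would check that $\gen{X_1}_G \cdots \gen{X_n}_G$ equals $G$: for any $Y \in G$, picking the index $i$ with $X_i = Y$ and using $E \in \gen{X_j}_G$ for every $j$ together with $Y \in \gen{Y}_G$, we can realize $Y$ as the product $E \cdots E \cdot Y \cdot E \cdots E$ with $Y$ in the $i$th slot. The reverse inclusion is automatic. This shows $V = \bigcup G$, and hence $\bigcup G \in \Sat(G)$. The only subtle point is keeping straight the three levels of structure at play---elements of $S$, subsets of $S$ (that is, elements of $2^S$), and subsets of $G$ (elements of $2^G$)---and noticing that on elements of $2^S$ of the form $\bigcup H$ with $H \subseteq G$, the semigroup operation of $2^S$ tracks the setwise product in $G$, which is what makes the computation transparent.
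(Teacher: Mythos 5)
Your proof is correct and follows essentially the same route as the paper's: identify $X^{\omega+\ast}$ with the union of the cyclic subgroup generated by $X$, multiply these together, and use distributivity of the setwise product over union together with the fact that the product of all these cyclic subgroups is $G$. The only cosmetic difference is that you index the cyclic subgroups by elements of $G$ rather than listing each cyclic subgroup once, which changes nothing.
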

\begin{proof}
Let $C_1, \dots, C_k$ be an exhaustive list of the cyclic subgroups of $G$. Note that, for any generator $X$ of $C_i$, $X^{\omega + \ast} = \bigcup C_i$, so $\bigcup C_i \in \Sat(G)$ for every $i$. Also note that $G = C_1 \cdots C_k$. Therefore, since multiplication distributes over union, $\bigcup G = \left(\bigcup C_1\right) \cdots \left(\bigcup C_k\right) \in \Sat(G)$.
\end{proof}

We will use the merge decomposition (Section~\ref{sec:merge}) to give a short proof of the following theorem.
\begin{theorem}[cf.~\cite{Hen1988,HRS2010AP,PlaZei2016FO}]\label{thm:APPL}
Let $S$ be a semigroup. The set $\PL_{\AP}(S)$ is the saturation of the set of singletons in $2^S$. Moreover, if $A$ is a generating set for $S$, then $\PL_{\AP}(S) = \PL_{\SL^k}(S)$, where $k = (|A|-1)2^{\binom{|S|}{2}} + 2^{|A|} - 1$.
\end{theorem}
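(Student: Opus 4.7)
For the easy direction $U := \Sat(\{\{s\} : s \in S\}) \subseteq \PL_\AP(S) \subseteq \PL_{\SL^k}(S)$, I would verify that $\PL_\AP(S)$ is itself a saturated subsemigroup of $2^S$: singletons are pointlike via the identity on $S$; downward closure and closure under set products follow from $\rho^{-1}(t)\rho^{-1}(t') \subseteq \rho^{-1}(tt')$; and closure under $X \mapsto X^{\omega+*}$ uses aperiodicity of the target exactly as described in the paragraph preceding the theorem. The rightmost inclusion is immediate since $\SL^k \subseteq \AP$.

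For the hard direction $\PL_{\SL^k}(S) \subseteq U$, the plan is to construct, by induction on $|A|$, a relational morphism $\rho \colon S \relto T$ with $T \in \SL^k$ and $\PL_\rho(S) \subseteq U$; this yields $\PL_{\SL^k}(S) \subseteq \PL_\rho(S) \subseteq U$ automatically. In the base case $|A| = 1$, $S = \gen{a}$ is monogenic of some index $m$ and period $p$, and the canonical aperiodic quotient $S \to S_{\mathrm{ap}}$ that collapses the group kernel $G = \{a^m, \ldots, a^{m+p-1}\}$ to a single point lies in $\SL$. Its singleton preimages are either tail-singletons or the full group $G$, and $G = \{a\}^{\omega+*}$ lies in $U$, matching $k = 2^1 - 1 = 1$.

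For the inductive step $|A| \geq 2$, I would split $A = A_1 \cup \{a\}$ with $A_1 := A \setminus \{a\}$, and set $T_1 := \gen{A_1}$, $T_2 := \gen{a}$, $T_0 := \gen{T_1 T_2}$, paralleling the proof of Theorem~\ref{thm:2KR}. The merge decomposition $\psi_M \colon A^+ \to T_M$ of Section~\ref{sec:merge}, taken along the restricted surjections $\psi_i \colon A_i^+ \onto T_i$ and $\chi(t_1,t_2) := t_1 t_2$, divides $S$ by $T_M$ via Corollary~\ref{cor:merge}. I would then replace each of $T_0, T_1, T_2$ by inductively obtained relational morphisms into $\SL^{k_i}$-semigroups ($T_1$ by the inductive hypothesis since $|A_1| < |A|$, $T_2$ by the base case, and $T_0$ by a separate bound), assemble the three replacements into a triple product as in Facts~\ref{fac:triple} and~\ref{fac:merge}, and control the overall $\SL$-depth by Fact~\ref{fac:dsddepth}. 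The preimage analysis from Proposition~\ref{prop:merge} is designed to show that any $\rho$-preimage of a single element of this triple product corresponds in $S$ to a set expressible from pointlike subsets of $T_0, T_1, T_2$ via the saturation operations, hence contained in $U$.

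The principal obstacles I anticipate are: (i) handling $T_0$, whose natural generating set $T_1 T_2$ can have up to $|T_1| |T_2|$ elements so the induction on $|A|$ does not apply directly — the $2^{\binom{|S|}{2}}$ factor in the stated bound should reflect a uniform $\SL$-depth bound for $T_0$ obtained through pointlike ``pair'' considerations in $S$ (the set of unordered pairs has $\binom{|S|}{2}$ elements); (ii) careful depth bookkeeping through $|A|-1$ merge applications so that the cumulative bound reaches $k = (|A|-1)\,2^{\binom{|S|}{2}} + 2^{|A|} - 1$, with the extra $2^{|A|}-1$ summand accounting for the base-case contribution after its saturations get propagated through the recursion; and (iii) verifying that the concrete preimage sets produced by the merged relational morphism genuinely lie in the \emph{saturation} $U$, using that $U$ is closed under the operations suggested by the middle coordinate $(T_0^I)^{T_1^I \times T_2^I}$ of the triple product (set products, downward closure, and the $\omega+*$ operation from Lemma~\ref{lem:groupsat}).
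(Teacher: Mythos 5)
Your plan for the hard direction captures the right high-level strategy---recurse via the merge decomposition and track depth by Fact~\ref{fac:dsddepth}---but the induction as you set it up does not close; the gap is exactly your obstacle (i), and it is not resolved. Inducting on $|A|$ cannot handle $T_0 = \gen{T_1T_2}$, whose generating set $T_1T_2$ is not smaller than $A$, and ``pointlike pair considerations'' is a heuristic, not a recursive call. The paper's fix is to prove a strictly stronger claim, parametrized not by a generating set of $S$ but by an arbitrary homomorphism $\phi \colon A^+ \to 2^S\setminus\{\emptyset\}$, and to induct on the lexicographic pair $(|S_\phi|, |\phi(A)|)$ where $S_\phi := \bigcup \im(\phi)$. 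The first coordinate is what makes the merged sub-problem shrink: one chooses $a_0$ with $\phi(a_0) S_\phi \subsetneq S_\phi$, puts into $A_1 := \{a \in A : \phi(a) = \phi(a_0)\}$ exactly the letters with that image and sets $A_2 := A\setminus A_1$, so that every word in $(A_1^+A_2^+)^+$ starts with a letter of $A_1$ and hence $S_{\phi_0} \subseteq \phi(a_0)S_\phi \subsetneq S_\phi$. Your split---an arbitrary single letter $a$ placed into $A_2$, the rest into $A_1$---guarantees no decrease in either coordinate.

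Once $|S_\phi|$ enters the inductive measure, a case you do not anticipate becomes essential: when $\phi(a)S_\phi = S_\phi = S_\phi\phi(a)$ for every $a\in A$, the reduction above is unavailable. The paper's Case~1 shows that then $S_\phi = \bigcup(e\,U_\phi\,e)$ for an idempotent $e$ in the minimal ideal of $U_\phi := \im(\phi)$, which lies in $\Sat(U_\phi)$ by Lemma~\ref{lem:groupsat}, so the trivial $\SL$-target already suffices. You invoke Lemma~\ref{lem:groupsat} only in a $|A|=1$ base case, which is a different case ($|\phi(A)|=1$, the paper's Case~2); moreover, that base case does not land in $\SL$ as you assert---a monogenic aperiodic semigroup of index $m$ lies in $\SL^m$, not $\SL$, so your claimed base-case bound fails once $m \geq 2$. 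Finally, the parallel with Theorem~\ref{thm:2KR} is misleading: there $T_0, T_1, T_2$ may be taken as actual subsemigroups of $S$, because $\V_S$ is allowed to contain $S$'s own group divisors; here the targets must be aperiodic, so they must be recursively produced aperiodic approximants, and the recursion needs a measure under which all three sub-problems---including $T_0$---genuinely shrink. That measure is $(|S_\phi|, |\phi(A)|)$, and Case~1 together with Lemma~\ref{lem:groupsat} is what guarantees termination.
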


\begin{proof}
Throughout the proof, for any finite alphabet $A$, semigroup $S$, and homomorphism $\phi \colon A^+ \to 2^S$, define $U_\phi := \im(\phi)$, $S_{\phi} := \bigcup U_\phi$, and $k(\phi) := (|\phi(A)|-1)2^{{}^{\binom{|S_{\phi}|}{2}}} + 2^{|S_{\phi}|} - 1$.

\begin{claim}\label{prop:APinduction}
For any homomorphism $\phi \colon A^+ \to 2^S\setminus \{\emptyset\}$, there exists a homomorphism $\psi \colon A^+ \to T$ with $T \in \SL^{k(\phi)}$ and $\bigcup \phi(\psi^{-1} t) \in \Sat(U_\phi)$ for every $t \in T$.
\end{claim}
\begin{proof}[Proof of Claim.]
The construction of $\psi \colon A^+ \to T$ with $T \in \SL^{k(\phi)}$ is by induction on the parameter $(|S_\phi|,|\phi(A)|)$ in $\mathbb{N}^2$, ordered lexicographically.\\[2mm]
\underline{Case 1. For every $a \in A$, $\phi(a)S_{\phi} = S_{\phi} = S_{\phi}\phi(a)$.}\\[1mm]
Let $e = \phi(w)$ be an idempotent in the minimal ideal of $U_\phi$. Then $G := e U_\phi e$ is a subgroup of $U_\phi$, see, e.g.,~\cite[App.~A]{RS2009}. By Lemma~\ref{lem:groupsat}, $\bigcup G$ lies in $\Sat(e U_\phi e)$, and hence also in $\Sat(U_\phi)$, since $e U_\phi e \subseteq U_\phi$. Using the assumption in this case and the fact that multiplication distributes over union, we have
\[ S_\phi = \phi(w) S_\phi \phi(w) = e \left(\bigcup U_\phi\right) e  = \bigcup G.\]
Thus, $S_\phi$ lies in $\Sat(U_\phi)$, and we choose $\psi$ to be the trivial homomorphism $A^+ \to \{1\} \in \SL$.\\[2mm]
\underline{Case 2. $|\phi(A)| = 1$.}\\[1mm]
Denote the unique element of $\phi(A)$ by $X$. Since $U_\phi$ is a finite cyclic semigroup, pick $m \leq |U_\phi|$ such that $X^m$ is idempotent, i.e., $X^m = X^\omega$. Let $T := \gen{x \ | \ x^m = x^{m+1}}$, the finite aperiodic cyclic semigroup of order $m$, and let $\psi \colon A^+ \to T$ be the homomorphism defined by $a \mapsto x$ for every letter $a \in A$. Note that $T \in \SL^m$~\cite[Lem.~4.1.27]{RS2009}, and, since $U_\phi \subseteq 2^{S_\phi} \setminus \{\emptyset\},$ we have $m \leq |U_\phi| \leq 2^{|S_\phi|} - 1 = k(\phi)$.
From the definitions, note that, for $1 \leq i < m$, $\bigcup \phi(\psi^{-1} x^i) = X^i$, which lies in $U_\phi$, and for $i \geq m$, $\bigcup \phi(\psi^{-1} x^i) = X^{\omega + \ast}$, which lies in $\Sat(U_\phi)$.\\[2mm]
\underline{Case 3. $|\phi(A)| \geq 2$, and there is $a_0 \in A$ such that $\phi(a_0)S_\phi \subsetneq S_\phi$ or $S_\phi \phi(a_0) \subsetneq S_\phi$.}\\[1mm]
 Without loss of generality, we may assume $\phi(a_0)S_\phi \subsetneq S_\phi$.
Let $A_1 := \{ a \in A \ | \ \phi(a) = \phi(a_0)\}$, and $A_2 := A \setminus A_1$. Note that, since $|\phi(A)| \geq 2$, $\phi(A_1)$ and $\phi(A_2)$ are non-empty proper subsets of $\phi(A)$. For $i = 1,2$, denote by $\phi_i$ the restriction of $\phi$ to $A_i^+$, and pick $\psi_i \colon A_i^+ \to T_i$ with $T_i \in \SL^{k(\phi_i)}$ and $\bigcup \phi(\psi_i^{-1} t) = \bigcup \phi_i(\psi_i^{-1} t) \in \Sat(U_{\phi_i}) \subseteq \Sat(U_\phi)$, for all $t \in T_i$.  Without loss of generality, we may assume the $\psi_i$ are surjective.

Let $\phi_0 \colon (T_1 \times T_2)^+ \to 2^S\setminus \{\emptyset\}$ be the unique homomorphism defined, for $(t_1,t_2) \in T_1 \times T_2$, by $\phi_0(t_1,t_2) := \bigcup \phi(\psi_1^{-1}t_1 \cdot \psi_2^{-1}t_2)$. Note that $S_{\phi_0} \subseteq \phi(a_0) S_\phi$, since any $w \in \psi_1^{-1}t_1 \cdot \psi_2^{-1}t_2$ starts with a letter from the subalphabet $A_1$. Since $\phi(a_0) S_\phi \subsetneq S_\phi$ by assumption, $|S_{\phi_0}| < |S_\phi|$, so the induction hypothesis applies to $\phi_0$: pick a homomorphism $\chi \colon (T_1 \times T_2)^+ \to T_0$ with $T_0 \in \SL^{k(\phi_0)}$ such that $\bigcup \phi_0(\chi^{-1}(t)) \in \Sat(U_{\phi_0}) \subseteq \Sat(U_\phi)$, for every $t \in T_0$.

Define $\mu \colon (A_1^+A_2^+)^+ \to (T_1 \times T_2)^+$ and $\psi_0 := \chi \circ \mu$, as in Section~\ref{sec:merge}.
Note that, for any $w_1 \in A_1^+$, $w_2 \in A_2^+$, we have $\phi(w_1w_2) \subseteq \phi_0(\mu(w_1w_2))$, and, hence, $\phi(w) \subseteq \phi_0(\mu(w))$ for all $w \in (A_1^+A_2^+)^+$. Therefore, by the definition of $\psi_0$, $\bigcup \phi(\psi_0^{-1}t) \subseteq \bigcup \phi_0(\chi^{-1}t)$ for all $t \in T_0$, so also $\bigcup \phi(\psi_0^{-1}t) \in \Sat(U_\phi)$.
Applying the construction of Section~\ref{sec:merge}, let $\psi_M \colon A^+ \to T_M$ be the merge homomorphism, and pick $f \colon T_M \to T_2^I \times T_0^I \times T_1^I$ such that $f \circ \psi_M = \tau$. Let $t \in T_M$, and write $f(t) = (t_2,t_0,t_1) \in T_2^I \times T_0^I \times T_1^I$. If $(t_2,t_0,t_1) \in T_2 \times T_0 \times T_1$, then
\[ \bigcup \phi(\psi_M^{-1}t) \subseteq \bigcup \phi(\tau^{-1}(t_2,t_0,t_1)) =
\bigcup \phi(\psi_2^{-1}t_2) \cdot \bigcup \phi(\psi_0^{-1}t_0) \cdot \bigcup \phi(\psi_1^{-1}t_1) \in \Sat(U_\phi),\]
and, if $t_i = I_i$ for one or more $i \in \{0,1,2\}$, a similar inclusion holds, omitting the corresponding factors $\bigcup \phi(\psi_i^{-1}t_i)$ from the final product.

Let us write $m := |S_{\phi}|$. Note that, since $S_{\phi_0}$ is strictly contained in $S_{\phi}$ and $\phi_0(T_1 \times T_2)$ is contained in $2^{S_{\phi_0}} \setminus \{\emptyset\}$, we have
\begin{equation*}\label{eq:T0bound}
k(\phi_0) \leq (2^{m-1}-2)2^{\binom{m-1}{2}} + 2^{m-1} - 1 = 2^{\binom{m}{2}} - 2^{\binom{m-1}{2}+1} + 2^{m-1}-1 \leq 2^{\binom{m}{2}} - 1,
\end{equation*}
using that $\binom{m}{2} = m-1 + \binom{m-1}{2}$ and $2^{m-1} \leq 2^{\binom{m-1}{2}+1}$.

By Facts~\ref{fac:merge}~and~\ref{fac:dsddepth}, $T_M \in \SL^k$, where $k = k(\phi_0) + \max\{k(\phi_1),k(\phi_2)\} + 1$. Using that $|\phi(A_i)| < |\phi(A)|$, we have
\begin{align*}
k(\phi_0) + \max\{k(\phi_1),k(\phi_2)\} + 1 \leq \left(2^{\binom{m}{2}} - 1\right) + \left((|\phi(A)| - 2) 2^{\binom{m}{2}} + 2^{m} - 1\right) + 1 = k(\phi). \quad \qedhere
\end{align*}
\end{proof}

Now, to prove the theorem, let $A$ be a generating set for $S$, define $\phi \colon A^+ \to 2^S$ by $\phi(a) := \{a\}$ for $a \in A$, and pick $\psi \colon A^+ \to T$ as in the claim. Then $U_\phi$ is the set of singletons, $|\phi(A)| = |A|$, and $S_\phi = S$, so that $k(\phi) = (|A|-1)2^{\binom{|S|}{2}}+2^{|S|} - 1=:k$. Define the relational morphism $\rho \colon S \relto T$ by $\rho^{-1}(t) := \bigcup \phi(\psi^{-1} t)$. Then, for any $\SL^k$-pointlike $X \subseteq S$, we have $X \subseteq \rho^{-1}(t)$ for some $t \in T$, and therefore, since $\rho^{-1}(t)$ lies in $\Sat(U_\phi)$ by the claim, so does $X$. We have proved that $\PL_{\SL^k}(S) \subseteq \Sat(U_\phi)$, while the remarks at the beginning of this section imply $\Sat(U_\phi) \subseteq \PL_{\AP}(S)$, which is clearly contained in $\PL_{\SL^k}(S)$, since $\SL^k \subseteq \AP$. Thus, $\PL_{\SL^k}(S) = \Sat(U_\phi) = \PL_{\AP}(S)$.
\end{proof}

\section*{Acknowledgements}
In an earlier version of the proof of Theorem~\ref{thm:APPL}, we proved the claim for $k(\phi) := |\phi(A)|2^{{}^{\binom{|S_\phi|}{2}}}$. We acknowledge the help of MathOverflow~\cite{SteinbergMO2017} for guiding us to the slightly better bound given in the paper.
\bibliographystyle{amsplain}
\bibliography{goolsteinberg2017}

\def\cprime{$'$} \def\cprime{$'$}
\providecommand{\bysame}{\leavevmode\hbox to3em{\hrulefill}\thinspace}
\providecommand{\MR}{\relax\ifhmode\unskip\space\fi MR }
\providecommand{\MRhref}[2]{%
  \href{http://www.ams.org/mathscinet-getitem?mr=#1}{#2}
}
\providecommand{\href}[2]{#2}
\begin{thebibliography}{10}

\bibitem{Almeida99}
J.~Almeida, \emph{Some algorithmic problems for pseudovarieties}, Publ. Math.
  Debrecen \textbf{54} (1999), no.~suppl., 531--552, Automata and formal
  languages, VIII (Salg\'otarj\'an, 1996).

\bibitem{AS03}
K.~Auinger and B.~Steinberg, \emph{On the extension problem for partial
  permutations}, Proc. Amer. Math. Soc. \textbf{131} (2003), no.~9, 2693--2703.

\bibitem{Eilenberg76}
S.~Eilenberg, \emph{Automata, languages, and machines. {V}ol. {B}}, Academic
  Press, New York, 1976, With two chapters by Bret Tilson, Pure and Applied
  Mathematics, Vol. 59.

\bibitem{Hen1988}
K.~Henckell, \emph{Pointlike sets: the finest aperiodic cover of a finite
  semigroup}, J. Pure Appl. Algebra \textbf{55} (1988), 85--126.

\bibitem{holonomypdt}
K.~Henckell, S.~Lazarus, and J.~Rhodes, \emph{Prime decomposition theorem for
  arbitrary semigroups: general holonomy decomposition and synthesis theorem},
  J. Pure Appl. Algebra \textbf{55} (1988), no.~1-2, 127--172.

\bibitem{HRS2010AP}
K.~Henckell, J.~Rhodes, and B.~Steinberg, \emph{Aperiodic pointlikes and
  beyond}, Internat. J. Algebra Comput. \textbf{20} (2010), no.~2, 287--305.

\bibitem{PDT}
K.~Krohn and J.~Rhodes, \emph{Algebraic theory of machines. {I}. {P}rime
  decomposition theorem for finite semigroups and machines}, Trans. Amer. Math.
  Soc. \textbf{116} (1965), 450--464.

\bibitem{Arbib}
K.~Krohn, J.~Rhodes, and B.~Tilson, \emph{Algebraic theory of machines,
  languages, and semigroups}, Edited by Michael A. Arbib. With a major
  contribution by Kenneth Krohn and John L. Rhodes, Academic Press, New York,
  1968, Chapters 1, 5--9.

\bibitem{PlaZei2016FO}
T.~Place and M.~Zeitoun, \emph{Separating regular languages with first-order
  logic}, Logical Methods in Computer Science \textbf{12} (2016), no.~1:5,
  1--30.

\bibitem{RS99}
J.~Rhodes and B.~Steinberg, \emph{Pointlike sets, hyperdecidability and the
  identity problem for finite semigroups}, Internat. J. Algebra Comput.
  \textbf{9} (1999), no.~3-4, 475--481.

\bibitem{RS2009}
\bysame, \emph{{The q-theory of Finite Semigroups}}, Springer, 2009.

\bibitem{SteinbergMO2017}
B.~Steinberg, \emph{A strange two-variable recursion}, MathOverflow question,
  answered by M.~Fischler, https://mathoverflow.net/q/278517.

\bibitem{Slice}
\bysame, \emph{On pointlike sets and joins of pseudovarieties}, Internat. J.
  Algebra Comput. \textbf{8} (1998), no.~2, 203--234.

\bibitem{Str1994}
H.~Straubing, \emph{Finite automata, formal logic, and circuit complexity},
  Progress in Theoretical Computer Science, Birkh\"auser Boston Inc., Boston,
  1994.

\bibitem{Wil99}
T.~Wilke, \emph{Classifying discrete temporal properties}, STACS'99, Lec.
  Notes. Comp. Sci., vol. 1563, Springer, 1999, pp.~32--46.

\end{thebibliography}

\end{document}